\numberwithin{equation}{section}
\newtheorem{thm}{Theorem}[section]
\newtheorem{lma}[thm]{Lemma}
\newtheorem{cor}[thm]{Corollary}
\renewcommand{\geq}{\geqslant}
\renewcommand{\leq}{\leqslant}
\renewcommand{\epsilon}{\varepsilon}
\newcommand{\lld}{\underline{\dim}_{\mathrm{loc}}}
\newcommand{\ad}{\dim_{\mathrm{A}}}
\newcommand{\bd}{\dim_{\mathrm{B}}}
\newcommand{\hd}{\dim_{\mathrm{H}}}
\newcommand{\N}{\mathbb{N}}
\title{The Assouad dimension of self-affine carpets \\ with no grid structure}
\author{Jonathan M. Fraser \& Thomas Jordan
 }
\begin{document}
\maketitle

\begin{abstract}
Previous study of the Assouad dimension of planar self-affine sets has relied heavily on the underlying IFS having a `grid structure', thus allowing for the use of approximate squares.  We study the Assouad dimension of a class of self-affine carpets which do not  have an associated grid structure.  We find that the Assouad dimension is related to the box and Assouad dimensions of the (self-similar) projection of the self-affine set onto the first coordinate and to the local dimensions of the projection of a natural Bernoulli measure onto the first coordinate.  In a special case we relate the Assouad dimension of the Przytycki-Urba\'nski sets to the lower local dimensions of Bernoulli convolutions.   \\

\emph{Mathematics Subject Classification} 2010:   37C45,  28A80.

\emph{Key words and phrases}: Assouad dimension, self-affine carpet, local dimension, Bernoulli convolution.
\end{abstract}

\section{Introduction}

The Assouad dimension is an important concept in embedding problems and metric geometry and is becoming increasingly popular in the study of fractals.  It is larger than the other commonly used notions of dimension, such as Hausdorff and box-counting dimension, and gives a coarse indication of how thick the thickest part of a metric space is.  In 2011, Mackay \cite{mackay} computed the Assouad dimension of a simple but fascinating family of self-affine sets known as Bedford-McMullen carpets.  These sets were first considered by Bedford \cite{bedford} and McMullen \cite{mcmullen} in the mid-1980s and have been intensively studied since then. One of the key reasons for their popularity is that, despite exhibiting many of the complications and key features of  self-affine sets (such as non-conformal distortion on small scales and the possibility of  distinct Hausdorff and box-counting dimensions), they have a very simple grid structure which facilitates analysis and makes them much more tractable than general self-affine sets.   Mackay's work was subsequently extended in \cite{fraser, fraserhowroyd} to more general classes of self-affine set, but the calculations and dimension formulae still relied heavily on a grid structure.  In this paper we drop the dependence on a grid structure for the first time and in doing so relate the Assouad dimension to, both geometric and symbolic, local dimensions of certain Bernoulli measures.  A particularly demonstrative example is provided by the self-affine sets considered by Przytycki-Urba\'nski  \cite{PU} and here the Assouad dimension can be computed in terms of the infimal local dimension of the associated Bernoulli convolution.  We discuss this particular example in detail and reveal that different phenomena are present depending on the number theoretic properties of the parameter.

\subsection{The Assouad dimension}

Let $X = [0,1]^2$.  For any non-empty subset $E \subseteq X$ and $r>0$, let $N_r (E)$ be the smallest number of open sets with diameter less than or equal to $r$ required to cover $E$.  The \emph{Assouad dimension} of a non-empty set $F \subseteq X$ is defined by
\begin{eqnarray*}
\ad F &=&  \inf \Bigg\{ s \  : \ \text{      $ (\exists \, C>0)$ $(\forall \, 0<r<R)$ $(\forall \, x \in F)$} \\ \\
&\, & \hspace{40mm}  N_r\big( B(x,R) \cap F \big) \ \leq \ C \bigg(\frac{R}{r}\bigg)^s  \Bigg\}.
\end{eqnarray*}
For more information on the Assouad dimension including some discussion of its basic properties, we refer the reader to \cite{luk, robinson, fraser}.  For the purposes of this work it is convenient to point out that one may replace $B(x,R)$ with a small square of sidelength comparable to $R$ and the function $N_r( \cdot)$ with any related covering or packing function using small balls, general open sets, or small squares, all with diameter comparable to $r$.

\subsection{Self-affine carpets}

Roughly speaking, self-affine carpets are planar self-affine sets generated by a finite family of contracting diagonal matrices. More precisely, let $\mathcal{I}$ be a finite index set and $\{S_i\}_{i \in \mathcal{I}}$ be a collection of affine maps which map the unit square $X$ into itself and have the form
\[
S_i(x,y) = (b_ix,a_iy)+ \underline{t}_i
\]
for constants $a_i, b_i \in (0,1)$ and a translation vector $\underline{t}_i \in [0,1-b_i] \times [0, 1-a_i]$.  Such a collection of maps is called an \emph{iterated function system} (IFS) and it is well-known that there is a unique non-empty compact set $F$ satisfying
\[
F = \bigcup_{i \in \mathcal{I}} S_i(F),
\]
see \cite[Chapter 9]{falconer}.  The set $F$ is called the \emph{attractor} of the IFS and is the associated \emph{self-affine carpet}.  Computing the dimensions of such self-affine carpets is challenging, due to the fact the maps contract by different amounts in different directions, but is considerably more tractable than the case of general self-affine sets, where the diagonal matrices are replaced with general non-singular contracting matrices.

We assume that the interiors of the rectangles $S_i(X)$ are pairwise disjoint; a condition sometimes referred to as the rectangular open set condition.   Often one requires additional conditions on the defining IFS in order to obtain explicit  dimension formulae.  One such condition which is particularly relevant to our work is the requirement that the IFS has a `grid structure'.  Such self-affine sets were considered by Bedford-McMullen, as well as Bara\'nski \cite{baranski} and Lalley-Gatzouras \cite{lalley-gatz}.  What this means is that the rectangles $S_i(X)$ are aligned in such a way that when they are projected onto the coordinate axes they either fall precisely on top of each other or have disjoint interiors.  This allows one to introduce `approximate squares', which greatly facilitate dimension estimates and forces the coordinate projections to be self-similar sets satisfying the open set condition.  Assuming this grid structure, the Assouad dimensions of these carpets were computed by Mackay \cite{mackay} and Fraser \cite{fraser}.  What has emerged is that the Assouad dimension is given by
\[
\ad F = \dim \pi F + \max_C \dim \text{C}
\]
where $\pi$ is the projection onto the relevant coordinate axis and the maximum is taken over an explicit finite collection of sets $C$ induced by the columns given by the grid structure in the direction of the fibers of $\pi$.  It seemed plausible that in the absence of a grid structure a similar formula may hold, but it was unclear what  should play the role of the column sets.  It was also unclear which dimension `$\dim$' should refer to, since in the grid case all the relevant dimensions are equal.  Our work sheds light on both of these issues.

\section{Results}
We consider the self-affine carpets described in the previous section, under the additional assumption that for all $i \in \mathcal I$ we have $a_i=\alpha < \beta = b_i$ for fixed constants $\alpha, \beta \in (0,1)$. For more discussion of this assumption, see Section \ref{further}.  We emphasise that we do not require a grid structure and the projection $\pi F \subseteq [0,1]$ is a self-similar set which may have complicated overlaps.

Let $\mathcal{I}^* = \bigcup_{k\geq1} \mathcal{I}^k$ denote the set of all finite sequences with entries in $\mathcal{I}$ and for $\textbf{i}= \big(i_1, i_2, \dots, i_k \big) \in \mathcal{I}^*$ write
\[
S_{\textbf{i}} = S_{i_1} \circ S_{i_2} \circ \dots \circ S_{i_k}
\]
noting that this map contracts by $\beta^k$ horizontally and $\alpha^k$ vertically.  Also write $\lvert \textbf{i} \rvert = k$ to denote the length of the word $\textbf{i}$.

 Let $m = \lvert \mathcal I \rvert$ and $\mu$ be the unique Borel probability measure on $F$ satisfying
\[
\mu \left( S_\textbf{i} (X) \right) = m^{-k}
\]
for all $\textbf{i} \in \mathcal{I}^k$.  It is easy to see that this measure is well-defined and simply corresponds to the push forward of the uniform Bernoulli measure on the associated shift space under the natural coding map.

  Let $\pi$ denote orthogonal projection from $X$ onto the first coordinate and write $\pi \mu = \mu \circ \pi^{-1}$ for the push-forward of $\mu$ under $\pi$.  It is clear that $\pi \mu$ is a self-similar measure supported on the self-similar set $\pi F$.   The measure $\pi\mu$ will be key to our analysis and in particular the quantity
\[
s\ :=\ \sup\{t:\exists C>0\text{ with }\pi\mu(I)\leq C|I|^t\text{ for all subintervals } I\subseteq [0,1]\}.
\]
It should be noted that $s$ is non-negative and bounded above by the lower local dimension at any point $x$, i.e.
\[
0 \, \leq \,  s \, \leq  \, \underline{\dim}_\text{loc} (\pi \mu , x) \ :=  \ \liminf_{r\to 0}\frac{\log\pi\mu(B(x,r))}{\log r}
\]
for all $x\in [0,1]$.  We will argue below that this upper bound for $s$ is actually an equality in our setting.  The minimal lower local dimension is bounded above by the Hausdorff and box dimension of the support of the measure and thus
\[
 s  \leq \bd \pi F \leq \min \left\{\frac{\log m}{-\log \beta}, \, 1\right\}.
\]
We also observe that $s$ can be expressed in terms of the $L^q$-spectrum of $\pi\mu$.   The $L^q$-spectrum is a standard tool in multifractal analysis and information theory and is defined as follows for a given Borel probability measure $\nu$.  For $q >0$ and $r>0$ let
\begin{eqnarray*}
M_r^q(\nu) &=& \sup \left\{ \sum_{i} \nu(U_i)^q : \{U_i\}_i \text{ is a centered packing of $\text{supp} \, \nu$ by balls of radius $r$ }\right\}
\end{eqnarray*}
and then the $L^q$-spectrum of $\nu$ is defined by
\[
\tau_{ \nu}(q) = \lim_{r \to 0} \frac{\log M_r^q(\nu)}{\log r}
\]
provided this limit exists. It was proved by Peres and Solomyak \cite{exists}  that the above limit exists for all $q>0$ whenever $\nu$ is a self-similar measure, and so in particular $\tau_{ \pi \mu}(q)$ exists for all $q>0$.  It is straightforward to see that the $L^q$-spectrum  is non-decreasing and concave in $q$.  We observe that $s$ is equal to the slope of the asymptote as $q \to \infty $.  This result holds much more generally than just for self-similar measures.
\begin{lma}\label{lq}
For $s$ defined above and $\tau_{ \pi \mu}$ the $L^q$-spectrum of the projected measure $\pi \mu$, we have
\[
s \ = \ \inf  \left\{ t \geq 0 \ : \    \tau_{ \pi \mu}(q) <  tq  \ \text{\emph{for all $q>0$}}  \right\}.
\]
\end{lma}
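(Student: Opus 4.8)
The plan is to first rewrite the right-hand side in a more workable form and then establish two matching inequalities directly from the definition of the $L^q$-spectrum. Write $\nu = \pi\mu$ and $\tau = \tau_{\pi\mu}$. For $q>0$ the inequality $\tau(q)<tq$ is equivalent to $t>\tau(q)/q$, so the condition ``$\tau(q)<tq$ for all $q>0$'' holds precisely when $t\geq \sigma := \sup_{q>0}\tau(q)/q$, the only subtlety being that strictness matters exactly when this supremum is attained; in either case the infimum of admissible $t$ equals $\sigma$. Thus the right-hand side of the lemma is $\sigma$, and it suffices to prove $s=\sigma$. I would then treat the two inequalities separately.

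For the bound $s\leq\sigma$, I would take any admissible exponent $t$, that is any $t$ for which there is $C>0$ with $\nu(I)\leq C|I|^{t}$ for all intervals $I$, and estimate the packing sums from above. Given a centered packing $\{U_i\}_i$ of $\text{supp}\,\nu$ by balls of radius $r$, each factor satisfies $\nu(U_i)\leq C(2r)^{t}$ while $\sum_i \nu(U_i)\leq 1$ by disjointness; hence for $q\geq 1$ one gets $\sum_i \nu(U_i)^{q}\leq (C(2r)^{t})^{q-1}\sum_i \nu(U_i)\leq C' r^{t(q-1)}$, so $M_r^{q}(\nu)\leq C' r^{t(q-1)}$ and, after dividing the logarithm by $\log r<0$ and letting $r\to 0$, $\tau(q)\geq t(q-1)$. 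Dividing by $q$ and letting $q\to\infty$ gives $\sigma\geq t$, and taking the supremum over admissible $t$ yields $\sigma\geq s$.

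For the reverse bound $s\geq\sigma$, I would start from any $t<\sigma$ and manufacture a uniform decay estimate for $\nu$. By definition of $\sigma$ there is $q_0>0$ with $\tau(q_0)>tq_0$. Since a single ball $B(x,r)$ with $x\in\text{supp}\,\nu$ is itself a centered packing, $M_r^{q_0}(\nu)\geq \nu(B(x,r))^{q_0}$; on the other hand $\log M_r^{q_0}(\nu)=(\tau(q_0)+o(1))\log r$, and because $\tau(q_0)>tq_0$ with $\log r<0$ this forces $M_r^{q_0}(\nu)\leq r^{tq_0}$ for all sufficiently small $r$. Combining the two estimates gives $\nu(B(x,r))\leq r^{t}$ uniformly over $x\in\text{supp}\,\nu$ and small $r$. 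I would finish by promoting this to all intervals: a short interval meeting $\text{supp}\,\nu$ is contained in a ball centered in the support of comparable radius, while intervals bounded below in length are absorbed into the constant $C$. This makes $t$ admissible, so $t\leq s$; letting $t\uparrow\sigma$ gives $\sigma\leq s$, and hence $s=\sigma$.

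I expect the main obstacle to be the second inequality, where the hypothesis about $\tau$ is only an asymptotic statement about the growth rate of $M_r^{q_0}(\nu)$ as $r\to 0$ and must be upgraded to a genuine pointwise bound $\nu(B(x,r))\leq r^{t}$ holding uniformly in $x$ and at every small scale. Controlling the $o(\log r)$ error term in the exponent, and then passing from centered balls to arbitrary intervals, is where the real bookkeeping lies. By contrast, the first inequality is a routine H\"older-type estimate on the packing sums, and the reformulation of the right-hand side as $\sup_{q>0}\tau(q)/q$ only uses that this supremum simultaneously governs the strict and non-strict versions of the defining condition.
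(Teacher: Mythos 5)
Your proposal is correct and follows essentially the same route as the paper: one direction uses a single centred ball as a trivial packing to turn the growth rate of $M_r^q$ into a uniform bound on $\nu(B(x,r))$, and the other bounds the packing sum under the Frostman-type hypothesis to get $\tau_{\pi\mu}(q)\gtrsim tq$ up to a bounded defect. The only (harmless) differences are cosmetic: you rewrite the right-hand side as $\sup_{q>0}\tau_{\pi\mu}(q)/q$ and control the packing sum via $\sum_i\nu(U_i)\leq 1$ where the paper counts at most $r^{-1}$ balls, and you are in fact slightly more careful than the paper in passing from centred balls back to arbitrary intervals.
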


\begin{proof}
 Let $t>0$ and $q > 0$ be such that $\tau_{ \pi \mu}(q) = t q$ and let $\varepsilon>0$.  Then there exists a uniform constant $C>0$ such that
\[
M_r^q(\pi \mu) \  \leq  \  C r^{(t-\varepsilon)q}.
\]
Then, since $\{B(x,r)\}$ is a (rather trivial) $r$-packing for any particular choice of $x$ we have the following estimate which is uniform in $x$
\[
\mu\left( B(x,r) \right)^q \  \leq \ M_r^q(\pi \mu)  \  \leq  \  C r^{(t-\varepsilon)q}
\]
which proves that $s\geq t-\varepsilon $ and letting $\varepsilon \to 0$ completes the proof that
\[
s \ \geq\ \inf  \left\{ t \geq 0 \ : \    \tau_{ \pi \mu}(q) <  tq  \ \text{for all $q>0$} \} \right\}.
\]

 To prove the inequality in the other direction we suppose that there exist $C,t>0$ such that $\pi\mu(I)\leq C|I|^t$ for all $I\subset [0,1]$. We then suppose that $\{U_i\}$ is a centered packing of $\text{supp} \, \pi\mu$ by balls of radius $r$. It follows that there are at most $r^{-1}$ elements in $\{U_i\}$ and 
$$\sum_{i}\pi\mu(U_i)^q\leq Cr^{tq-1}.$$
Thus taking the supremum over all such centrered packings yields,
$$M_r^q(\pi\mu)\leq Cr^{tq-1}$$
and thus
$$\tau_{ \pi\mu}(q)\geq tq-1.$$
Thus for all $q >0$ we have that
$$\frac{\tau_{ \pi\mu}(q)}{q}\geq \frac{tq-1}{q}.$$
Thus if we choose $t_1<t$ then there will exist $q>0$ such that $\frac{\tau_{ \pi\mu}(q)}{q}>t_1$ and so 
\[
s \ \leq\ \inf  \left\{ t \geq 0 \ : \    \tau_{ \pi \mu}(q) <  tq  \ \text{for all $q>0$} \right\}
\]
as required.
\end{proof}

If the multifractal formalism  holds for all $q> 0$ it follows that $s$ is actually equal to the infimal lower local dimension.  Feng and Lau \cite{Fengmf} showed that this is the case for self-similar measures satisfying the weak separation condition for example, but we can say more.   Theorem 1.1 from \cite{feng0} states that, in the case of self-similar measures, the multifractal formalism holds for $q \geq 1$ whenever the $L^q$-spectrum is differentiable at $q$.  Since the $L^q$-spectrum is non-decreasing and concave, we can find a sequence of $q \to \infty$ along which the $L^q$-spectrum is differentiable and, moreover, the derivatives converge to $s$, by the above lemma.  Thus we may conclude that
\[
s \ = \  \inf  \left\{ t \geq 0 \ : \    \tau_{ \pi \mu}(q) <  tq  \ \text{for all $q>0$} \} \right\} \ = \ \inf_{x \in \pi F} \underline{\dim}_\text{loc} (\pi \mu , x) .
\]
Finally, it follows from Proposition 2.2 of \cite{Fengmf} that this value is strictly positive, provided the set $\pi F$ is not a singleton.

Define an equivalence relation on $\mathcal I^*$ by $\textbf{i} \sim \textbf{j}$ if and only if
\[
\pi S_\textbf{i}(X) = \pi S_\textbf{j}(X)
\]
and denote the equivalence class of $\textbf{i}$ by $ [\textbf{i}]$.  Note that $\textbf{i} \sim \textbf{j} \Rightarrow \lvert \textbf{i} \rvert = \lvert \textbf{j} \rvert $.  Let
\[
H \ :=  \  \sup_{\textbf{i} \in \mathcal I^*} \frac{\log \lvert [\textbf{i}] \rvert }{\lvert \textbf{i} \rvert },
\]
where $\lvert [\textbf{i}] \rvert$ denotes the cardinality of $[\textbf{i}]$, and observe that $0 \leq H \leq \log m$.  If the underlying IFS of similarity maps defining $\pi F$ generates a free semigroup, then $[\textbf{i}] = \{ \textbf{i}\}$ for all $\textbf{i}$, which renders $H = 0$, and if the semigroup is not free, then $H>0$.  Here
\[
\frac{\log m- H}{-\log \beta} \geq s
\]
is the `minimal symbolic local dimension' of $ \pi \mu$.   Note that
\[
0 \leq \frac{H}{-\log \alpha} \leq \frac{\log m \beta^s}{-\log \alpha}  \leq \frac{\log m}{-\log \alpha}.
\]

We are now ready to state our main result, which bounds the Assouad dimension of $F$ in terms of the parameters defined above.

\begin{thm} \label{main}
Let $F$ be a self-affine carpet in the class defined above.  Then
\begin{eqnarray*}
\max \left\{ \bd \pi F \, + \, \frac{\log m \beta^s}{-\log \alpha},  \ \  \ad \pi F  \, + \, \frac{H}{-\log \alpha} \right\}  \ \leq \   \ad F \ \leq \ \ad \pi F \, + \, \frac{\log m \beta^s}{-\log \alpha}.
\end{eqnarray*}
\end{thm}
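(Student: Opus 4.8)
The plan is to analyse $F$ through the two natural scales attached to any $\delta>0$: the vertical level $\ell_\delta=\log\delta/\log\alpha$ (at which cylinders have height $\delta$) and the horizontal level $L_\delta=\log\delta/\log\beta$ (at which they have width $\delta$), noting $\ell_\delta<L_\delta$ because $\alpha<\beta$. The single combinatorial input driving every estimate is a \emph{horizontal counting lemma}: if $I$ is an interval with $|I|\le\beta^n$, then the number of words $\mathbf i\in\mathcal I^n$ with $\pi S_{\mathbf i}(X)\cap I\ne\emptyset$ is at most $C(m\beta^s)^n$. This follows by taking $I'$ to be a $\beta^n$-neighbourhood of $I$: the cylinders counted are pairwise disjoint (rectangular open set condition) and project into $I'$, so their cardinality times $m^{-n}$ is at most $\mu(\pi^{-1}I')=\pi\mu(I')\le C|I'|^s\le C'\beta^{ns}$, giving $C'(m\beta^s)^n$. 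Rescaling by $g_{\mathbf a}^{-1}$ (the horizontal part of $S_{\mathbf a}^{-1}$, a similarity of ratio $\beta^{-|\mathbf a|}$) turns this into: inside any level-$k$ cylinder, the number of level-$(k+n)$ subcylinders meeting a strip of width $\le\beta^{k+n}$ is again $\le C(m\beta^s)^n$.

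For the upper bound I would fix $x\in F$ and $0<r<R$ and estimate $N_r(B(x,R)\cap F)$ as a product of a horizontal and a vertical count. Horizontally, $\pi(B(x,R)\cap F)\subseteq B(\pi x,R)$ is covered by at most $C(R/r)^{\ad\pi F}$ intervals of width $r$, straight from the definition of $\ad\pi F$. Fixing one width-$r$ strip $J$, the occupied $r$-squares inside $J\cap B(x,R)$ correspond to the distinct level-$\ell_r$ vertical cylinders meeting $J$ within the height-$2R$ window. I would group these by their level-$\ell_R$ prefix $\mathbf a$: for each fixed $\mathbf a$ the rescaled counting lemma bounds the admissible extensions by $C(m\beta^s)^{\ell_r-\ell_R}=C(R/r)^{(\log m\beta^s)/(-\log\alpha)}$, using $\ell_r-\ell_R=\log(R/r)/(-\log\alpha)$. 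The number of admissible prefixes $\mathbf a$ is $O(1)$ by an area argument: the level-$\ell_R$ cylinders in play are disjoint rectangles of dimensions $\beta^{\ell_R}\times R$ all lying in a fixed $O(\beta^{\ell_R})\times O(R)$ box. Multiplying the two counts yields $N_r(B(x,R)\cap F)\le C(R/r)^{\ad\pi F+(\log m\beta^s)/(-\log\alpha)}$, as required.

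For the lower bounds I would construct, for arbitrarily large ratios $R/r=\alpha^{-b}$, a point $x$ and a family of length-$L_r$ words whose images are pairwise $r$-separated and lie in $B(x,R)$, arranged so that a \emph{vertical block} and a \emph{horizontal block} of symbols act on disjoint coordinates and therefore multiply; a long localising prefix of length $\ell_R$ shrinks the window to scale $R$, which is precisely what lets the two blocks occupy disjoint symbol-ranges even for large $R/r$. For $\ad\pi F+H/(-\log\alpha)$ the vertical block is an equivalence class $[\mathbf w]$ with $|[\mathbf w]|\ge e^{(H-\epsilon)(\ell_r-\ell_R)}$ (such classes exist at every length since equivalence composes, $\mathbf{ij}\sim\mathbf{i'j'}$); its members share one projection but sit at distinct heights, giving $(R/r)^{(H-\epsilon)/(-\log\alpha)}$ pure vertical bands, while a horizontal block placed inside an Assouad-thick window of $\pi F$ (reached by the prefix) supplies $(R/r)^{\ad\pi F-\epsilon}$ columns. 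For $\bd\pi F+(\log m\beta^s)/(-\log\alpha)$ the vertical block is taken at a point realising the infimal local dimension $s$ of $\pi\mu$, where a width-$\beta^{\ell_r-\ell_R}$ strip meets $\gtrsim(m\beta^s)^{\ell_r-\ell_R}$ cylinders at distinct heights; since these nearly span the strip they again form pure vertical bands, and the horizontal block now needs only box-thickness, automatically present in every cylinder copy of $\pi F$. In both cases distinct (band, column) pairs give distinct $r$-squares, so $N_r(B(x,R)\cap F)$ is at least the product of the two counts, and letting $\epsilon\to0$ gives the two lower bounds.

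The main obstacle throughout is the non-conformality: the two directions refine at the different rates $-\log\beta$ and $-\log\alpha$, so one scale $\delta$ corresponds to two word-lengths and the directions cannot be treated by a single self-map. In the upper bound this is absorbed by the rescaling $g_{\mathbf a}^{-1}$ together with the area bound on the number of relevant prefixes; the key conceptual point is that, absent a grid, the vertical multiplicity is governed by the measure $\pi\mu$ through $s$ rather than by any combinatorial column count, and it is exactly the counting lemma that converts $\pi\mu(I)\le C|I|^s$ into the factor $m\beta^s$. In the lower bounds the difficulty is to realise the horizontal dimension and the vertical multiplicity simultaneously over one common ratio $R/r$, which I expect to be the hard part: the resolution is to pay for a long localising prefix so that the vertical block (lengths $\ell_R$ to $\ell_r$) and the horizontal block (lengths $L_R$ to $L_r$) sit in disjoint symbol-ranges and contribute independently. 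The fiddliest verification is confirming, in the $\bd\pi F$ bound, that the measure-tight cylinders genuinely occupy $\gtrsim(m\beta^s)^{\ell_r-\ell_R}$ distinct $r$-bands rather than collapsing, and that appending the horizontal box-block never merges squares coming from different bands.
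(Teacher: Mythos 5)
Your proposal is correct modulo routine $\epsilon$-bookkeeping, and your two lower bounds coincide with the paper's constructions: the $\bd \pi F$ bound via an interval where $\pi\mu$ is $(s+\epsilon)$-thick (such intervals exist precisely because the supremum defining $s$ fails at exponent $s+\epsilon$), lifted to a vertical tube, squarified by a prefix of matching length, with each cylinder meeting the tube contributing a box-dimension count of $\pi F$; and the $\ad \pi F$ bound via powers of a near-extremal equivalence class, $\lvert [\mathbf{i}^n] \rvert \geq \lvert [\mathbf{i}] \rvert^n$, with an Assouad-thick window of $\pi F$ placed inside the common projection and a squarifying prefix --- your ``disjoint symbol ranges'' picture is exactly the paper's scale-matching $r_0/R_0 \asymp \alpha^{n \lvert \mathbf{i} \rvert}$. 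The verification you flag as fiddliest (bands not collapsing, squares not merging) is resolved in the paper not by literal vertical alignment of the measure-tight cylinders but by the separation condition: one $r$-square can meet at most $9$ cylinders of the relevant level, so the per-cylinder counts add up to within a constant. Where you genuinely differ is the upper bound's decomposition. The paper counts all level-$n$ cylinders meeting the $R$-square at once, applying the measure bound to an expanded tube of width $(\alpha/\beta)^k + 2\beta^{n-k}$, and then covers each cylinder horizontally; since the tube width and the cylinder width are incomparable in general, this forces a two-case analysis (Case 1 / Case 2, the latter also needing $\ad \pi F \geq s$). You instead cover the projected window first by $\lesssim (R/r)^{\ad \pi F + \epsilon}$ strips of width $r$ and then count cylinders per strip, grouping by the $O(1)$ admissible level-$\ell_R$ prefixes (your area argument appears in the paper as the observation that an $R$-square meets boundedly many level-$k$ cylinders); because the interval fed into your counting lemma then always has length comparable to the cylinder width $\beta^{\ell_r - \ell_R}$, the case split disappears entirely. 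That reorganization is a genuine, if modest, simplification. One imprecision to repair: your counting lemma asserts $\pi\mu(I') \leq C \lvert I' \rvert^{s}$, but the supremum defining $s$ need not be attained, so the lemma must be run with exponent $s - \epsilon$ (as the paper does throughout) and $\epsilon \to 0$ taken only at the end; this changes nothing structurally.
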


There are several concrete settings where we can obtain a sharp result.  First recall that the \emph{weak separation property}, see \cite{lau,zerner}, is a weaker condition than the open set condition and applies to self-similar sets with `controllable overlaps'.  In particular, the main result of \cite{fraseretal} gives that if  $\pi F$ satisfies the weak separation property, then  $\bd \pi F = \ad \pi F$, and if the weak separation property fails then $\ad \pi F = 1$.

\begin{cor}
Let $F$ be a self-affine carpet in the class defined above.
\begin{enumerate}
\item  If $\bd \pi F = 1$, for example if $\pi F = [0,1]$, then
\[
 \ad F \ = \ 1 \, + \, \frac{\log m \beta^s}{-\log \alpha}.
\]
\item  If $\pi F$ satisfies the weak separation property, then
\[
 \ad F \ = \ \bd \pi F \, + \, \frac{\log m \beta^s}{-\log \alpha}.
\]
\item  If $s$ is given by the minimal symbolic local dimension of $\pi \mu$, i.e. if
\[
s = \frac{\log m- H}{-\log \beta} ,
\]
then
\[
 \ad F \ = \ \ad \pi F \, + \, \frac{H}{-\log \alpha}.
\]
\end{enumerate}
\end{cor}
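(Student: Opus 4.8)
The plan is to derive all three parts directly from Theorem \ref{main} by exhibiting, in each case, conditions that force the lower and upper bounds in the statement to coincide; the theorem does all the geometric work, so what remains is to collapse the sandwich. Throughout I will use the general facts, valid for any bounded subset of $\R$, that $\bd \pi F \le \ad \pi F \le 1$, together with the cited consequences of the main result of \cite{fraseretal}.

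For part (1), I would first observe that $\bd \pi F = 1$ forces $\ad \pi F = 1$, since $1 = \bd \pi F \le \ad \pi F \le 1$. Consequently the first term in the lower bound of Theorem \ref{main} reads $1 + \frac{\log m \beta^s}{-\log \alpha}$, while the upper bound reads $\ad \pi F + \frac{\log m \beta^s}{-\log \alpha} = 1 + \frac{\log m \beta^s}{-\log \alpha}$. The two agree, pinching $\ad F$ to the common value. The case $\pi F = [0,1]$ is just the special instance where $\bd \pi F = 1$ is immediate.

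For part (2), I would invoke the dichotomy from \cite{fraseretal}: if $\pi F$ satisfies the weak separation property then $\bd \pi F = \ad \pi F$. Substituting this equality into the first lower-bound term of Theorem \ref{main} gives $\bd \pi F + \frac{\log m \beta^s}{-\log \alpha} = \ad \pi F + \frac{\log m \beta^s}{-\log \alpha}$, which is exactly the upper bound; hence equality holds throughout.

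For part (3), the key step is the algebraic identity that when $s = \frac{\log m - H}{-\log \beta}$ one has $\log(m\beta^s) = \log m + s \log \beta = \log m - (\log m - H) = H$, so that $\frac{\log m \beta^s}{-\log \alpha} = \frac{H}{-\log \alpha}$. With this substitution the second lower-bound term becomes $\ad \pi F + \frac{H}{-\log \alpha}$, which coincides with the upper bound $\ad \pi F + \frac{\log m \beta^s}{-\log \alpha}$; moreover the first lower-bound term $\bd \pi F + \frac{H}{-\log\alpha}$ does not exceed this since $\bd \pi F \le \ad \pi F$, so the maximum defining the lower bound is attained by the second term and matches the upper bound. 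I expect no genuine obstacle here: the only thing to watch is the bookkeeping of which of the two competing lower-bound terms is active in each case, and in part (3) the verification that the active term is the Assouad-dimension term. All three assertions are therefore immediate corollaries once Theorem \ref{main} is in hand.
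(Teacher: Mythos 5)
Your proposal is correct and matches the paper's (implicit) argument exactly: each part follows by pinching the lower and upper bounds of Theorem \ref{main}, using $\bd \pi F = \ad \pi F$ in cases (1) and (2) (the paper itself notes these two cases are subsumed by that single condition) and the identity $\log(m\beta^s) = H$ in case (3). Nothing further is needed.
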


At this point we highlight that we are not aware of any examples which do not fall under cases \emph{1.,  2.} or \emph{3.} in the above corollary and it is plausible that no such examples exist. In fact Theorem 6.6 in the paper \cite{shmerkin}\footnote{This result appeared between the first and second versions of this paper.} gives a large class of examples which either fall into case \emph{1.} with $s=1$ or case \emph{3.} with $H=0$.   

Both cases \emph{1.} and \emph{2.} could be subsumed into a single condition: $\bd \pi F = \ad \pi F$, but we state them separately because one would go about checking them in different ways.  Also, sets satisfying \emph{1.} may have complicated overlaps in the projection and we want to emphasise that we can still explicitly handle this situation in some cases.

\subsection{Applications to Bernoulli convolutions and Przytycki-Urba\'nski sets}

In order to give a concrete example, as well as demonstrate some interesting properties, we consider a special family of self-affine carpets in our class.  This family is that considered by Przytycki and Urba\'nski in \cite{PU} and is heavily related to Bernoulli convolutions, see \cite{solomyak, PSS}.

Fix  $0<\alpha \leq 1/2 < \beta < 1$, let $m=2$, and consider the IFS consisting of the maps
\[
S_1(x,y) = (\beta x, \alpha y) \qquad \text{and} \qquad S_2(x,y) = (\beta x+1-\beta, \alpha y+1-\alpha).
\]
Here $\pi \mu$ is the well-studied $\beta$-\emph{Bernoulli convolution}, $\nu_\beta$, i.e. the (rescaled) distribution of the random series $\sum \pm \beta^k$ where the signs are chosen independently and without bias,  and $\pi F$ is simply the unit interval.  As such, we obtain a sharp result for the Assouad dimension of $F$:
\[
 \ad F \ = \ 1 \, + \, \frac{\log 2 \beta^s}{-\log \alpha}.
\]
It is known that for any such $\alpha, \beta$ the box (and packing) dimensions of $F$ are given by
\[
 \bd F \ = \ 1 \, + \, \frac{\log 2 \beta}{-\log \alpha}
\]
and this, therefore, coincides with the Assouad dimension if and only if $s=1$, and otherwise is strictly smaller. Also for any such $\alpha, \beta$ the Hausdorff dimension of $F$ can be bounded below by 
\[
 \hd F \ \geq \ \hd \mu \ = \ \hd \nu_\beta \, + \, \frac{\log 2 \beta}{-\log \alpha}.
\]
Thus if $\dim \nu_\beta=1$ then $\hd F=\bd F$. On the other hand if $\dim \nu_{\beta}<1$ then it is possible that $\hd F<\bd F$ (in all cases where it is known that $\dim \nu_{\beta}<1$ this is the case). Recall that the (lower) Hausdorff dimension of a measure is equal to the essential infimum of the lower local dimensions and so if $\dim \nu_{\beta}<1$ then $\ad F>\bd F$.  All of the measures we discuss here are exact dimensional and so the lower Haudorff dimension also coincides with the upper packing dimension which is the essential supremum of the upper local dimensions.  In general it is not known whether there exists $\beta$ where $\hd \nu_{\beta}=1$ but where $s<1$.

As is common with the study of Bernoulli convolutions, it is natural to consider different special cases related to the number theoretical properties of $\beta$:

\begin{enumerate}
\item  If $1/\beta$ is \emph{Garsia}, i.e., a real algebraic integer with norm 2 whose conjugates lie strictly outside the unit disc, then the Bernoulli convolution is absolutely continuous with bounded density (see \cite{garsia}) and therefore $\hd \nu_\beta = s=1$ and 
\[
1< \hd F = \bd F = \ad F = 1 \, + \, \frac{\log 2 \beta}{-\log \alpha} < 2.
\]
\item If $1/\beta$ is \emph{Pisot}, i.e., a real algebraic integer greater than 1 whose conjugates lie strictly inside the unit disc, then it follows from Theorem 8 in \cite{PU} that $1<\hd F<\bd F$ and $\hd\nu_{\beta}<1$. Thus $s<1$ and we can conclude that
\[
1<\hd F < \bd F < \ad F<2 .
\]
In fact if $\beta_k$ is the positive solution to $\sum_{n=1}^kx^n=1$ then $\beta_k^{-1}$ is Pisot and it is shown in Theorem A of \cite{hu} that
\[
\inf_{x \in [0,1]}  \lld ( \nu_{\beta_k}, x) \ = \ \frac{\log \phi }{k \log \beta_k} - \frac{\log 2}{\log \beta_k}
\]
where $\phi$ is the Golden Ratio. 

\item If, for $n\geq 4$, $\gamma$ is the largest real root of $x^n-x^{n-1}-\cdots-x+1$,  then $\gamma$ is \emph{Salem},  i.e. a real algebraic integer greater than 1 whose conjugates lie inside the unit disc. If we let $\beta=\gamma^{-1}$ then it was shown by Feng  \cite[Theorem 1.2]{feng}  that $s<1$ and so in this case
\[
1<\bd F < \ad F  < 2.
\]
In this case it is unknown whether $\hd  \nu_{\beta}=1$ and whether $\bd F=\hd F$ (if $\hd  \nu_{\beta}=1$ then it follows that $\bd F=\hd F$, see above). 
\item
For almost every $\beta\in [2^{-1/2},1)$ it is known that $\nu_{\beta}$ is absolutely continuous with bounded density, see \cite[Corollary 1]{solomyak}. It has also recently been shown in Theorem 1.5 in \cite{shmerkin} that there exists a set $\mathcal{E}\subset (1/2,1)$ with packing dimension $0$ where for all $\beta\in (1/2,1)\backslash\mathcal{E}$ we can take $s=1$. Therefore for all $\beta\in (1/2,1)$, except for a set of packing dimension $0$, we have that
 \[
1< \hd F = \bd F = \ad F = 1 \, + \, \frac{\log 2 \beta}{-\log \alpha} < 2.
\]
\end{enumerate}

For a given $\beta\in (1/2,1)$ let $s(\beta)$ be the value of $s$ for the measure $\nu_\beta$.  We can give more precise estimates on $s(\beta)$, and thus on $\ad F$, using the convolution structure of $\nu_\beta$ and results from \cite{JSS}.   Indeed, the name `Bernoulli convolution' comes from the fact that $\nu_\beta$ is the (appropriately rescaled) infinite convolution of the measures $\tfrac{1}{2}(\delta_{-\beta^k}+\delta_{\beta^k})$. This approach was suggested to us by P\'eter Varj\'u.

\begin{lma}
For any $\beta \in (1/2,1)$ and $n \in \mathbb{N}$ we have $s(\beta)\geq s(\beta^n)$.  In particular, this shows that
\begin{enumerate}
\item $s(\beta) > 1/2$ for all $\beta \in (1/2,1)$
\item $s(\beta) \to 1$ as $\beta \to 2^{-1/n}$ for any $n \in \mathbb{N}$.
\end{enumerate}
\end{lma}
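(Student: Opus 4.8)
The plan is to exploit the convolution structure of the Bernoulli convolution, reducing the comparison of $s(\beta)$ and $s(\beta^n)$ to a Frostman-regularity statement that is stable under convolution. Realise $\nu_\beta$ as the distribution of the random series $\sum_{k\geq0}\epsilon_k\beta^k$ (up to the harmless affine rescaling that places it on $[0,1]$, which leaves $s$ unchanged) and split the index set into residue classes modulo $n$. Grouping the terms as $\sum_{j=0}^{n-1}\beta^j\big(\sum_{l\geq0}\epsilon_{j+nl}(\beta^n)^l\big)$ and using independence of the signs exhibits $\nu_\beta$ as the $n$-fold convolution of the rescaled copies $(\text{scaling by }\beta^j)_*\nu_{\beta^n}$, $j=0,\dots,n-1$. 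In particular $\nu_\beta=(\text{scaling}_c)_*\nu_{\beta^n}*\rho$ for a probability measure $\rho$ and a constant $c>0$ depending only on $\beta,n$.

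Next I would record the key stability property: if a probability measure $\eta$ satisfies $\eta(I)\leq C|I|^t$ for every interval $I$, then so does $\eta*\rho$ for any probability measure $\rho$, since $(\eta*\rho)(I)=\int\eta(I-y)\,d\rho(y)\leq C|I|^t$, using that the translate $I-y$ has the same length as $I$. Affine rescaling by $c$ changes only the constant. Applying this to the decomposition above shows that every Frostman exponent admissible for $\nu_{\beta^n}$ is admissible for $\nu_\beta$, whence $s(\beta)\geq s(\beta^n)$, which is the main inequality.

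For consequence \emph{(1)} I would combine this with the one regime where $s$ is computable exactly: when $\beta^n\leq 1/2$ the maps defining $\nu_{\beta^n}$ satisfy the open set condition, so $\nu_{\beta^n}$ is a self-similar measure with equal weights and $s(\beta^n)=\frac{\log2}{-\log\beta^n}=\frac1n\cdot\frac{\log2}{-\log\beta}$ (the formula persisting at the boundary $\beta^n=1/2$, where $\nu_{\beta^n}$ is Lebesgue measure). Writing $D=\frac{\log2}{-\log\beta}\in(1,\infty)$ and taking $n=\lceil D\rceil$ (the least $n$ with $\beta^n\leq1/2$) gives $s(\beta)\geq s(\beta^{\lceil D\rceil})=D/\lceil D\rceil\geq D/(D+1)>1/2$, since $D>1$ exactly because $\beta>1/2$. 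For consequence \emph{(2)}, as $\beta\uparrow 2^{-1/n}$ we have $\beta^n\uparrow 1/2$, so the same exact formula gives $s(\beta^n)=\frac{\log2}{-\log\beta^n}\to\frac{\log2}{\log2}=1$; combining $s(\beta)\geq s(\beta^n)$ with the upper bound $s(\beta)\leq\bd\pi F=1$ from the excerpt and squeezing yields $s(\beta)\to1$.

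The decomposition and the convolution-stability step are routine. The main obstacle is the two-sided nature of the limit in \emph{(2)}: the argument above controls $s(\beta)$ as $2^{-1/n}$ is approached from below, but on approaching from above (which for $n=1$ is the only possibility) one has $\beta^n>1/2$, so $\nu_{\beta^n}$ is no longer separated and the exact formula is unavailable; the best integer power lying in the separated regime is $\beta^{n+1}$, which yields only $s(\beta)\geq n/(n+1)$. The full two-sided statement therefore reduces to showing $s(\gamma)\to1$ as $\gamma\to(1/2)^+$, which does not follow from the inequality alone and would require additional input, such as a continuity property of $s$ at the separation threshold or the known absolute-continuity results for parameters just above $1/2$. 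I would accordingly either state \emph{(2)} as the limit from below or supply this further ingredient separately.
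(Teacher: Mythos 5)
Your argument is essentially the paper's own proof. The decomposition of $\nu_\beta$ as $\nu_{\beta^n}\ast\nu'$ (the paper groups the indices divisible by $n$ against the rest, exactly your residue-class splitting), the convolution-stability step $\nu_\beta(I)=\int\nu_{\beta^n}(I-x)\,d\nu'(x)\leq C\lvert I\rvert^{t}$, and the use of the open set condition regime $\beta^n\leq 1/2$ to get the exact value $s(\beta^n)=\frac{\log 2}{-n\log\beta}>1/2$ for part (1) all match; your bookkeeping $D/\lceil D\rceil>1/2$ is a correct rephrasing of the paper's choice $\beta^n\leq 1/2<\beta^{n-1}$. Your diagnosis of part (2) is also exactly right: the inequality $s(\beta)\geq s(\beta^n)$ plus the exact formula only controls the approach to $2^{-1/n}$ from below, and the paper closes the two-sided statement with precisely the external ingredient you identify, namely $\lim_{\beta\to 1/2}s(\beta)=1$, quoted from Theorem 1.11 of \cite{JSS}. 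One caveat on your proposed substitutes: absolute continuity with bounded density for parameters just above $1/2$ is only an almost-every statement and would not yield the limit for all $\beta$, so a continuity-type result for $s$ at the threshold (which is what the JSS theorem provides) is genuinely required. As written, your proposal fully proves the main inequality and part (1), and proves part (2) modulo that one citation.
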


\begin{proof}
This result essentially follows from the fact that taking the convolution of two measures does not decrease the value of $s$.    Begin by writing $\nu_\beta$ as the convolution of $\nu_{\beta^n}$ and another measure $\nu'$ which comes from taking the (appropriately rescaled) infinite convolution of the measures $\tfrac{1}{2}(\delta_{-\beta^k}+\delta_{\beta^k})$ for values of $k$ not divisible by $n$.  Let $t<s(\beta^n)$, which guarantees the existence of a constant $C>0$ such that for any interval $I \subseteq [0,1]$ we have $\nu_{\beta^n}(I) \leq C\lvert I \rvert^{t}$.  As such, for any interval $I \subseteq [0,1]$ we have
\[
\nu_\beta(I) \ = \ \left(\nu_{\beta^n} \ast \nu'\right)(I) \ = \ \int \nu_{\beta^n}(I-x) d \nu'(x) \ \leq \ \int  C\lvert I-x \rvert^{t} d \nu'(x) \ = \  C\lvert I \rvert^{t}
\]
and therefore $s(\beta) > t$ and letting $t$ tend to $s(\beta^n)$ proves the result.  Now, for a given $\beta$, choose $n=n(\beta) \in \mathbb{N}$ uniquely to satisfy $\beta^n \leq 1/2<\beta^{n-1}$ and note that $\nu_{\beta^n}$ is a self-similar measure satisfying the open set condition and so the value of $s(\beta^n)$ can easily be computed as
\[
s(\beta^n) = \frac{\log 2}{-n \log \beta} > 1/2
\]
proving \emph{1.}  Part \emph{2.} follows immediately from the estimate $s(\beta)\geq s(\beta^n)$ and the fact that $\lim_{\beta\to 1/2}s(\beta)=1$, which follows from \cite[Theorem 1.11]{JSS}.  
\end{proof}

The estimates from the previous lemma provide further estimates for $\ad F$.

\begin{cor}
For all $\beta\in (1/2,1)$ we have
\[
1 \ < \  \ad F \ = \ 1 \, + \, \frac{\log 2 \beta^{s(\beta)}}{-\log \alpha} \ < \ 1+\frac{\log 2}{-\log\alpha} -\frac{\log\beta}{2\log\alpha}  \ < \ 2
\]
and  $ \ad F \to  \bd F$ as $\beta\to 2^{-1/n}$ for any $n \in \mathbb{N}$.
\end{cor}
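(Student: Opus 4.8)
The plan is to treat this corollary as a direct consequence of the two explicit dimension formulas available for this family together with the bounds on $s(\beta)$ furnished by the preceding lemma. Since $\pi F = [0,1]$ we have $\bd \pi F = \ad \pi F = 1$, so case 1 of the earlier corollary already gives the central equality $\ad F = 1 + \frac{\log 2\beta^{s(\beta)}}{-\log\alpha}$, and we are given $\bd F = 1 + \frac{\log 2\beta}{-\log\alpha}$. The statement then splits into a chain of three strict inequalities and one limit, and I would reduce each to an elementary comparison of powers of $\beta$, fed by the estimates $1/2 < s(\beta) \le 1$.

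For the leftmost inequality $1 < \ad F$, since $-\log\alpha > 0$ it suffices to show $2\beta^{s(\beta)} > 1$. I would use that $s(\beta) \le \bd \pi F = 1$, so that $\beta^{s(\beta)} \ge \beta$ because $t \mapsto \beta^t$ is decreasing for $\beta < 1$, whence $2\beta^{s(\beta)} \ge 2\beta > 1$ as $\beta > 1/2$.

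The main idea behind the middle inequality is the algebraic rewriting
\[
1 + \frac{\log 2}{-\log\alpha} - \frac{\log\beta}{2\log\alpha} \ = \ 1 + \frac{\log 2 + \tfrac12\log\beta}{-\log\alpha} \ = \ 1 + \frac{\log 2\beta^{1/2}}{-\log\alpha},
\]
which recasts the upper bound in exactly the same shape as $\ad F$, with the exponent $s(\beta)$ replaced by $1/2$. The inequality $\ad F < 1 + \frac{\log 2\beta^{1/2}}{-\log\alpha}$ then reduces, after multiplying through by $-\log\alpha > 0$, to $\beta^{s(\beta)} < \beta^{1/2}$, i.e. to $s(\beta) > 1/2$, which is precisely part 1 of the preceding lemma. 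For the rightmost inequality I would observe that $1 + \frac{\log 2\beta^{1/2}}{-\log\alpha} < 2$ is equivalent to $2\alpha\beta^{1/2} < 1$, and this is immediate from $\alpha \le 1/2$ together with $\beta^{1/2} < 1$.

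Finally, for the limit I would compute the difference directly:
\[
\ad F - \bd F \ = \ \frac{\log 2\beta^{s(\beta)} - \log 2\beta}{-\log\alpha} \ = \ \frac{(s(\beta)-1)\log\beta}{-\log\alpha}.
\]
Holding $\alpha$ fixed and letting $\beta \to 2^{-1/n}$, part 2 of the lemma gives $s(\beta) \to 1$, while $\log\beta$ stays bounded and $-\log\alpha$ stays bounded away from $0$, so the difference tends to $0$ and $\ad F \to \bd F$. I do not anticipate any genuine obstacle: the only point requiring care is the identification of the upper bound with $1 + \frac{\log 2\beta^{1/2}}{-\log\alpha}$, after which every inequality collapses to a monotonicity comparison of powers of $\beta$ supplied by the already-proved estimates on $s(\beta)$.
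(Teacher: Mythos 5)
Your proposal is correct and follows exactly the route the paper intends: the paper states this corollary without proof as an immediate consequence of the preceding lemma ($s(\beta)>1/2$ and $s(\beta)\to 1$ as $\beta\to 2^{-1/n}$) together with the formula $\ad F = 1+\frac{\log 2\beta^{s(\beta)}}{-\log\alpha}$ from the Przytycki--Urba\'nski discussion, and your rewriting of the upper bound as $1+\frac{\log 2\beta^{1/2}}{-\log\alpha}$ plus the monotonicity of $t\mapsto\beta^t$ is precisely the elementary algebra being suppressed.
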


Closer inspection of the proof of \cite[Theorem 1.11]{JSS} would yield explicit upper bounds for $\ad F$ in terms of $\bd F$ and $\beta$ for $\beta$ in a neighbourhood of $\beta =  2^{-1/n}$, but we do not include the details.

Finally, a   potential application of these results is to gain information about the local dimensions of Bernoulli convolutions based on \emph{a priori} information about the Assouad dimension of $F$.  Consider the case where $\alpha = 1/2$.  Then
\[
s \ =  \  (2-\ad F) \frac{\log 2}{-\log \beta}
\]
and so, for example, if one knew that $\ad F \leq \theta$, then one could conclude that for all $x \in [0,1]$
\[
 \lld ( \nu_\beta, x) \ \geq \ s \ \geq \ (2-\theta) \frac{\log 2}{-\log \beta}.
\]

\section{Proofs}

The proof of Theorem \ref{main} will be broken up into three parts.  The first two parts together prove the lower bound: first the bound where $\bd \pi F$ appears in the formula, and second the bound where $\ad \pi F$ appears.  The final part will prove the upper bound.  Each of the following subsections is self-contained and notation introduced in one section does not carry over into the others.

For real-valued functions $f,g$, we write $f(x)\lesssim g(x)$ to mean that there exists a universal constant $M>0$ independent of $x$ such that $f(x)\leq Mg(x)$. Similarly, $f(x)\gtrsim g(x)$ means that $f(x)\geq Mg(x)$ with a universal constant independent of $x$.  If both $f(x)\lesssim g(x)$ and $f(x)\gtrsim g(x)$, then we write $f(x)\asymp g(x)$.  The constant $M$ can depend on parameters that are fixed by the defining IFS, such as $m$, $\alpha$, $\beta$, $s$, $H$, etc.

\subsection{Lower bound}

\subsubsection{Picking up the box dimension of $\pi F$}

Throughout this section let $d = \bd \pi F$.   In order to prove the first lower bound, it suffices to show that for all $\varepsilon>0$ there exist arbitrarily small $0<r<R$ with $(R/r) \to \infty$ such that for some square $Q$ intersecting $F$ with side length comparable to $R$ we have
\[
N_r(Q \cap F) \gtrsim (R/r)^{d - \log (m \beta^s)/\log \alpha - \varepsilon}.
\]
It will be convenient to consider pairs $R = \alpha^k$ and $r = \alpha^{n(k)+k}$ for a particular sequence of $k \to \infty$,  where $n(k)  = n \in \mathbb{N}$ is chosen (uniquely) to satisfy
\[
\beta^{n+1} < (\alpha/\beta)^k \leq \beta^n.
\]
Observe that $r \asymp R^{\log\alpha/\log \beta}$ and indeed $(R/r) \to \infty$ as $k \to \infty$.  With this in mind note that, by the definition of $s$, for any $\epsilon>0$ we can find sequences $(k_j)_{j \in\N}$ (with $k_j \to \infty$) and $(x_j)_{j \in\N}$ (with $x_j \in \pi F$) such that
\[
\pi\mu(B(x_j,(\alpha/\beta)^{k_j}))\geq (\alpha/\beta)^{k_j(s+\epsilon)}.
\]
Consider the vertical tube $T = \pi^{-1}( B(x_j,(\alpha/\beta)^{k_j}/2)) \cap X$ and let
\[
M(n(k_j)) = \# \{ \textbf{i} \in \mathcal I^{n(k_j)} : S_{\textbf{i}}(F) \cap T \neq \emptyset \}
\]
be the number of level $n(k_j)$ cylinders intersecting $T$.  Since each level $n(k_j)$ cylinder has $\mu$-mass $m^{-n(k_j)}$ we have
\[
\pi \mu \left( B(x_j,(\alpha/\beta)^{k_j}/2 )\right) \ \leq \ M(n(k_j)) m^{-n(k_j)}
\]
which yields
\[
M(n(k_j)) \ \geq \ m^{n(k_j)} (\alpha/\beta)^{k_j(s+\varepsilon)}.
\]
Choose an arbitrary $\textbf{i} \in \mathcal I^{k_j}$ and consider $Q = S_{\textbf{i}}(T)$, which is an $\alpha^{k_j}$ by $\alpha^{k_j}$ square intersecting $F$.  Consider $6Q$ defined to be the square formed by blowing $Q$ up by a factor of $6$ at its center.  It follows that every level $(n(k_j)+k_j)$ cylinder which intersects $Q$ is completely contained within $6Q$.   Let $\textbf{i} \in \mathcal I^{n(k_j)+k_j}$ and consider the problem of trying to cover $S_\textbf{i}(F)$ by squares of sidelength $\alpha^{n(k_j)+k_j}$.  By rescaling the problem by $\beta^{-(n(k_j)+k_j)}$ one observes that
\[
N_{\alpha^{n(k_j)+k_j}} \big( S_\textbf{i}(F) \big)  \ = \    N_{(\alpha/\beta)^{n(k_j)+k_j}} \big( \pi F \big) \  \gtrsim \   (\alpha/\beta)^{-(n(k_j)+k_j)(d-\varepsilon)} .
\]
with the implied constant independent of $k_j$.  Since one $\alpha^{n(k_j)+k_j}$ square can intersect no more than 9 of the level $(n(k_j)+k_j)$ cylinders (by virtue of our separation condition), and observing that by definition
\[
M(n(k_j)) \asymp \# \{ \textbf{i} \in \mathcal I^{n(k_j)+k_j} : S_{\textbf{i}}(F) \cap Q \neq \emptyset \}
\]
 we have
\begin{eqnarray*}
N_{\alpha^{n(k_j)+k_j}} (6Q \cap F) &\gtrsim&  M(n(k_j))  (\alpha/\beta)^{-(n(k_j)+k_j)(d-\varepsilon)} \\ \\
 &\geq&  m^{n(k_j)} (\alpha/\beta)^{k_j(s+\varepsilon)} (\alpha/\beta)^{-(n(k_j)+k_j)(d-\varepsilon)} \\ \\
&\gtrsim& m^{n(k_j)} \beta^{n(k_j)(s+\varepsilon)} \alpha^{-n(k_j)(d-\varepsilon)}  \\ \\
&\gtrsim& \left( \frac{\alpha^{k_j}}{\alpha^{n(k_j)+k_j}} \right)^{-\log m/\log\alpha -(s+\varepsilon)\log \beta/\log \alpha  +(d-\varepsilon)}
\end{eqnarray*}
which yields the desired lower bound upon letting $\varepsilon \to 0$.

\subsubsection{Picking up the Assouad dimension of $\pi F$}

Throughout this section let $d = \ad \pi F$.  In order to prove the second lower bound, it suffices to show that for all $\varepsilon>0$ there exist arbitrarily small $0<r'<R'$ with $(R'/r') \to \infty$ such that for some square $Q$ intersecting $F$ with side length comparable to $R'$ we have
\[
N_{r'}(Q \cap F) \geq (R'/r')^{d -  H/\log \alpha - \varepsilon}.
\]
Let $\varepsilon>0$ and choose $\textbf{i} \in \mathcal I^*$ such that
\[
\frac{  \log  \lvert [ \textbf{i} ] \rvert }{\lvert \textbf{i}  \rvert } \geq H-\varepsilon.
\]
Also, let $0<r<R<1$ and $I \subset [0,1]$ be an interval of length $R$ such that
\[
N_r(I \cap F) \gtrsim (R/r)^{d-\varepsilon}
\]
which can be done for arbitrarily small $0<r<R$ with $(R/r) \to \infty$ by the definition of $d$. For convenience, we also assume that $r/R \leq \alpha^{\lvert \textbf{i} \rvert}$.   Let $n \in \mathbb{N}$ be chosen (uniquely) to satisfy
\[
\alpha^{\lvert \textbf{i} \rvert(n+1)} < r/R \leq \alpha^{n\lvert \textbf{i} \rvert}.
\]
Consider $\textbf{i}^n$ (meaning the word $\textbf{i}$ concatenated with itself $n$ times) and observe that
\[
 \lvert [ \textbf{i}^n ] \rvert \geq   \lvert [ \textbf{i} ] \rvert^n.
\]
The collection $\{S_\textbf{j}(X) : \textbf{j} \in [ \textbf{i}^n ]\}$ is a collection of at least
\[
\exp(n \lvert  \textbf{i}  \rvert  (H-\varepsilon))
\]
rectangles of base length $\beta^{n \lvert \textbf{i} \rvert}$ and height $\alpha^{n \lvert \textbf{i} \rvert}$ which are all vertically aligned (i.e. all project to the same interval under $\pi$).  Consider the vertical strip of base $\beta^{n \lvert \textbf{i} \rvert}$ and height 1 which contains all of these rectangles and consider the thinner  sub-strip which lies above the appropriate image of $I$ (under the interval contraction induced by $S_{\textbf{i}^n}$) and label this thinner strip $T$.   This is a strip of base $\beta^{n \lvert \textbf{i} \rvert}R$ and height 1.  Choose $k \in \mathbb{N}$  (uniquely) to satisfy
\[
(\alpha/\beta)^{k+1} < \beta^{n \lvert \textbf{i} \rvert}R \leq (\alpha/\beta)^k
\]
and choose $\textbf{j} \in \mathcal I^k$ arbitrarily.  Let
\[
Q = S_{\textbf{j}}(T)
\]
and observe that $Q$ is a rectangle with  base length
\[
\beta^k \beta^{n \lvert \textbf{i} \rvert}R
\]
and height
\[
\alpha^k \asymp \beta^k \beta^{n \lvert \textbf{i} \rvert}R.
\]
Moreover, it is made up of at least $\exp(n\lvert \textbf{i} \rvert(H-\varepsilon))$  horizontal strips of height
\[
\alpha^{n \lvert \textbf{i} \rvert+k} \asymp \alpha^{n \lvert \textbf{i} \rvert}  \beta^k \beta^{n \lvert \textbf{i} \rvert}R.
\]
Write $R' = \beta^k \beta^{n \lvert \textbf{i} \rvert}R$ and $r' = \alpha^{n \lvert \textbf{i} \rvert}  \beta^k \beta^{n \lvert \textbf{i} \rvert}R$ and consider the problem of covering $Q\cap F$ by small squares of sidelength $r'$.   Since the height of each horizontal strip is comparable to $r'$, we may assume that each strip contributes individually.  Since each strip was chosen to lie above the appropriate image of $I$, we know that it will contribute $\gtrsim N_r(I \cap F)$ to any optimal cover (after rescaling the problem back to $I$ by a factor of $\beta^{-k} \beta^{-n \lvert \textbf{i} \rvert}$ and using the fact that $r \asymp R \alpha^{n \lvert \textbf{i} \rvert}$).  Therefore
\begin{eqnarray*}
N_{r'} (Q \cap F)  &\gtrsim&  \exp(n\lvert \textbf{i} \rvert(H-\varepsilon)) (R/r)^{d-\varepsilon} \\ \\
&\gtrsim&  (\alpha^{-n\lvert \textbf{i} \rvert})^{d-\varepsilon-(H-\varepsilon)/\log \alpha} \\ \\
&=&  (R'/r')^{d-\varepsilon-(H-\varepsilon)/\log \alpha}
\end{eqnarray*}
which yields the desired lower bound by observing that this estimate can be achieved for arbitrarily small $0<r'<R'$ with $R'/r' \to \infty$ by letting $R/r$ and thus $n$ tend to $\infty$ and letting $\varepsilon \to 0$.

\subsection{Upper bound}

The main difference between the upper and lower bound is that we must now handle all pairs $0<r<R$ and not just the sequence of pairs which is most convenient for the argument.  This makes the upper bound a little more subtle.

Throughout this section we  let $d = \ad \pi F$.  Let $0<r<R<1$ be arbitrary and choose $n,k \in \mathbb{N}$ (uniquely) to satisfy
\[
\alpha^{n+1} < r \leq \alpha^n.
\]
and
\[
\alpha^{k+1} < R \leq \alpha^k
\]
observing that $n \geq k$.  Consider an arbitrary $R$-square $Q$, and observe that it may only intersect at most a constant number of level $k$ cylinders (the height of which is approximately $R$) and so we may assume it intersects only one, $S_\textbf{j}(F)$ say.  Let
\[
M(n,k)  =  \# \{ \textbf{i} \in \mathcal I^n : S_{\textbf{i}}(F) \cap Q \neq \emptyset \} \asymp  \# \{ \textbf{i} \in \mathcal I^{n-k} : S_{\textbf{i}}(F) \cap S_\textbf{j}^{-1} Q \neq \emptyset \} .
\] 
Mirroring the lower bound we want to expand the vertical tube $S_\textbf{j}^{-1} Q$ so that any level $(n-k)$ cylinder contributing to $M(n,k)$ is  completely contained in the expanded tube.  However, there is a potential problem because the base length of a level $(n-k)$ cylinder is $\beta^{n-k}$ which is not necessarily comparable to the width of the original tube, which is $R \beta^{-k}$ (and  within a constant multiple of $(\alpha/\beta)^k$).  With this in mind, we do the best we can and expand the tube horizontally to have width
\[
(\alpha/\beta)^k + 2 \beta^{n-k}.
\]
We denote the expanded tube by $S_\textbf{j}^{-1} Q^*$ and let $B = \pi S_\textbf{j}^{-1} Q^*$ which is an interval inside $[0,1]$ intersecting $\pi F$ with diameter comparable to the  width of the expanded tube.  It follows that
\[
\pi \mu (B) \lesssim \left((\alpha/\beta)^k + 2 \beta^{n-k}\right)^{(s-\varepsilon)}
\]
with the implied constant independent of $n$ and $k$. Since each level $(n-k)$ cylinder carries weight $m^{-(n-k)}$ we also have the estimate
\[
\pi \mu (B) \geq M(n,k) m^{-(n-k)}
\]
which yields
\[
M(n,k) \lesssim m^{(n-k)} \left((\alpha/\beta)^k + 2 \beta^{n-k}\right)^{(s-\varepsilon)}.
\]
We wish to estimate $N_r(Q \cap F)$ and observing that each level $n$ cylinder intersecting $Q$ has height comparable to $r$ it again suffices to consider the contributions from each cylinder individually and then add them up.  Here we split our analysis into two cases. 
\\ \\  
\noindent  \textbf{Case 1: $\beta^{n-k}  \geq (\alpha/\beta)^k$}.
\\ \\
For $\textbf{i} \in \mathcal I^n$ we have for some $x \in \pi F$ that
\[
N_{r} \big( S_\textbf{i}(F) \cap Q \big)  \  \lesssim \     N_{r \beta^{-n}} \big( \pi F \cap B(x, R\beta^{-n}) \big) \  \lesssim \    \left(\frac{R\beta^{-n} }{r\beta^{-n} } \right)^{(d+\varepsilon)} \  \lesssim \    (\alpha^k/\alpha^n)^{(d+\varepsilon)}.
\]
Therefore
\begin{eqnarray*}
N_r(Q \cap F ) &\lesssim&  M(n,k)  (\alpha^{k-n})^{(d+\varepsilon)}\\ \\
  &\lesssim&  m^{(n-k)}  \beta^{(n-k)(s-\varepsilon)}  \alpha^{-(n-k)(d+\varepsilon)}
\end{eqnarray*}

\noindent  \textbf{Case 2: $\beta^{n-k}  < (\alpha/\beta)^k$}.
\\ \\
For $\textbf{i} \in \mathcal I^n$ we have
\[
N_{r} \big( S_\textbf{i}(F) \big)  \  \lesssim \    N_{r \beta^{-n}} \big( \pi F \big) \  \lesssim \    (r\beta^{-n})^{-(\bd \pi F+\varepsilon)} \  \lesssim \    (r\beta^{-n})^{-(d+\varepsilon)} \  \lesssim \    (\beta/\alpha)^{n(d+\varepsilon)}  .
\]
Therefore
\begin{eqnarray*}
N_r(Q\cap F) &\lesssim&  M(n,k)  (\beta/\alpha)^{n(d+\varepsilon)}\\ \\
 &\lesssim&  m^{(n-k)}  (\alpha/\beta)^{k(s-\varepsilon)}  (\beta/\alpha)^{n(d+\varepsilon)}\\ \\
  &=& m^{(n-k)}    (\beta/\alpha)^{n(d+\varepsilon)-k(s-\varepsilon)}\\ \\
  &\lesssim&  m^{(n-k)}  \beta^{(n-k)(s-\varepsilon)}  \alpha^{-(n-k)(d+\varepsilon)}
\end{eqnarray*}
where the final line uses the \textbf{Case 2} assumption and the fact that $d \geq s$.

In both cases we obtain
\begin{eqnarray*}
N_r(Q\cap F) &\lesssim&  m^{(n-k)}  \beta^{(n-k)(s-\varepsilon)}  \alpha^{-(n-k)(d+\varepsilon)}\\ \\
  &=&  \left( \alpha^{k}/\alpha^n \right)^{-\log m/\log\alpha -(s-\varepsilon)\log \beta/\log \alpha  +(d+\varepsilon)} \\ \\
    &\lesssim&  \left( \frac{R}{r}\right)^{-\log m/\log\alpha -(s-\varepsilon)\log \beta/\log \alpha  +(d+\varepsilon)}
\end{eqnarray*}
which yields the desired upper bound upon letting $\varepsilon \to 0$.

\section{Extensions and further work} \label{further}

It would clearly be interesting to push this work further and, in particular, to consider more general families of self-affine carpets, such as those considered by Feng and Wang \cite{fengaffine}.  Our main restriction was in assuming that the linear parts of all the defining affine maps were the same, i.e., of the form $(x,y) \mapsto (\beta x, \alpha y)$ for some fixed $0<\alpha<\beta<1$.  Our results depend crucially on this assumption.  The reason for this is that we need the measure of a cylinder to tell us both the height \emph{and} the width of the corresponding construction rectangle.  More precisely, the Assouad dimension was linked to the $\pi \mu$-measure of an interval in the projection by choosing the measure in such a way that the measure of an interval told us exactly what we see above it.  If the linear parts are different (even if the contractions in one of the directions are fixed) one may find very differently shaped construction rectangles projecting to intervals of the same measure, or even of the same measure and the same length, and so things would be much more complicated in this setting. In general it looks like finding good upper bounds for the Assouad dimension of self-affine sets is a very challenging problem.

\vspace{8mm}

\begin{centering}
\textbf{Acknowledgements}\\
This work began while both authors were participating in the ICERM Semester Program on \emph{Dimension and Dynamics} and are grateful for the stimulating atmosphere they found there.  The authors thank P\'eter Varj\'u,  De-Jun Feng and Xiong Jin for helpful discussions.  J.M.F. is financially supported by a \emph{Leverhulme Trust Research Fellowship}.
\end{centering}

\begin{multicols}{2}{

\noindent Jonathan M. Fraser\\
School of Mathematics and Statistics \\
The University of St Andrews\\
St Andrews\\
KY16 9SS\\
Scotland\\
\emph{Email: jmf32@st-andrews.ac.uk} \\

\noindent Thomas Jordan\\
School of Mathematics\\
University of Bristol\\
Bristol\\
BS8 1TW\\
\emph{Email: thomas.jordan@bristol.ac.uk} \\

}

\end{multicols}

\end{document}